\documentclass[11pt,a4paper]{article}
\usepackage[T1]{fontenc}
\usepackage{lmodern}
\usepackage{microtype}
\usepackage[a4paper,margin=28mm]{geometry}
\usepackage{amsmath,amssymb,amsthm,mathtools}
\usepackage{tikz}
\usepackage{aliascnt}
\usepackage{fancyhdr}

\newtheorem{theorem}{Theorem}[section]
\newaliascnt{lemma}{theorem}
\newtheorem{lemma}[lemma]{Lemma}
\aliascntresetthe{lemma}
\newaliascnt{corollary}{theorem}
\newtheorem{corollary}[corollary]{Corollary}
\aliascntresetthe{corollary}
\theoremstyle{remark}
\newtheorem{remark}[theorem]{Remark}
\theoremstyle{plain}
\usepackage[hidelinks]{hyperref}
\usepackage[nameinlink,capitalise]{cleveref}
\newcommand{\PP}{\mathbf P}
\newcommand{\RR}{\mathbf R}
\newcommand{\CC}{\mathbf C}
\newcommand{\cO}{\mathcal O}
\newcommand{\conv}{\operatorname{conv}}
\newcommand{\vol}{\operatorname{vol}}
\newcommand{\MV}{\operatorname{MV}}
\newcommand{\Dval}{\Delta^{\mathrm{val}}}
\newcommand{\authoremail}{\texttt{ylulu0610@gmail.com}}
\fancypagestyle{firstpage}{%
  \fancyhf{}%
  \fancyfoot[L]{\footnotesize\textsuperscript{*}E-mail address: \authoremail}%
  \fancyfoot[C]{\thepage}%
}
\title{Minkowski Decompositions for Generic Infinitesimal Newton–Okounkov Bodies of Arbitrary-Degree External Tensor Products on Products of Curves}
\author{Yi Lu\textsuperscript{*}}
\date{}
\begin{document}
\maketitle
\thispagestyle{firstpage}

\begin{abstract}
Explicit computations of generic infinitesimal Newton--Okounkov bodies
are difficult even for varieties with simple product structure. We give
an explicit formula in arbitrary dimension for positive-degree external
tensor products on products of smooth projective curves. Writing $d^\downarrow=(d_1^\downarrow,\ldots,d_n^\downarrow)$ for the
decreasing rearrangement of the degree vector and setting
$d_{n+1}^\downarrow=0$, the body admits the explicit Minkowski decomposition
$\sum_{j=1}^n(d_j^\downarrow-d_{j+1}^\downarrow)S_j^{(n)}$, where the
$S_j^{(n)}$ are the embedded simplices defined below.
Using this description, we give a sharp criterion for equality in the
Minkowski inclusion. A simultaneous relabeling argument also allows finitely
many such bodies to be realized on a common very general locus of flags after
independent decreasing rearrangements of their degree vectors.
\end{abstract}

\noindent\textbf{Keywords.} Newton--Okounkov body; product of curves;
Minkowski sum; mixed volume; Minkowski additivity.

\section{Introduction}
\label{sec:intro}

For products of curves, Fulger--Lozovanu computed the generic
infinitesimal body for equal factor degrees in every dimension, and for
arbitrary positive degrees in dimensions two and three
\cite[Theorems~1.1--1.2]{FLCurves}. Their Problem~6.1 asks for the
arbitrary-degree formula in general dimension. We resolve that problem for
positive-degree external tensor products by expressing the generic body as
an explicit Minkowski sum of the equal-degree simplices. The principal
contribution is therefore an explicit arbitrary-degree formula valid in all
dimensions; the simultaneous realization statement is a compatibility
refinement. In \cref{sec:additivity} we use the formula to characterize
equality in the standard Minkowski inclusion.

Throughout, the ground field is $\CC$, all curves are smooth, connected,
and projective, and
\[
 X=C_1\times\cdots\times C_n,\qquad
 L=L_1\boxtimes\cdots\boxtimes L_n,\qquad
 d=(d_1,\ldots,d_n),\quad d_i=\deg L_i>0.
\]
Fix $x=(x_1,\ldots,x_n)\in X$. For a finite-dimensional vector space $V$,
let $\operatorname{Flag}(V)$ denote its variety of complete linear flags. We
identify points of $\operatorname{Flag}(T_xX)$ with the associated
infinitesimal linear flags over $x$ described in \cref{sec:prelim}. A
\emph{very general locus} means the complement of a countable union of proper
Zariski-closed subsets. For a big line bundle $B$ on $X$ and such a flag
$Y_\bullet$, write $\Delta_{Y_\bullet}(B)$ for the corresponding infinitesimal
Newton--Okounkov body and $\Delta_x(B)$ for its common value on a very general
locus; the precise construction is recalled in \cref{sec:prelim}.

Let $e_1,\ldots,e_n$ be the standard basis of $\RR^n$, and define
\begin{equation}
 \label{eq:simplices}
 S_j^{(n)}=\conv\bigl(\{0,je_1\}\cup
 \{re_1+re_{n-j+r+1}:1\leq r<j\}\bigr),
 \qquad 1\leq j\leq n.
\end{equation}
For a positive vector $b=(b_1,\ldots,b_n)$, let $b^\downarrow$ denote its
decreasing rearrangement and set
\begin{equation}
 \label{eq:Pdef}
 \mathcal P(b):=\sum_{j=1}^n
 (b_j^\downarrow-b_{j+1}^\downarrow)S_j^{(n)},
 \qquad b_{n+1}^\downarrow=0.
\end{equation}
Let $\mathfrak S_n$ denote the permutation group of $\{1,\ldots,n\}$.
For $\sigma\in\mathfrak S_n$, let
\[
 \tau_\sigma:X\longrightarrow
 X^\sigma:=C_{\sigma(1)}\times\cdots\times C_{\sigma(n)},\qquad
 (x_i)_i\longmapsto(x_{\sigma(i)})_i,
\]
and write $x^\sigma=\tau_\sigma(x)$. For a line bundle $B$ on $X$, put
$B^\sigma=(\tau_\sigma^{-1})^*B$; for an infinitesimal flag
$Y_\bullet$ over $x$, write $Y_\bullet^\sigma$ for its transported flag
over $x^\sigma$.

\begin{theorem}[Minkowski formula and simultaneous realization]
\label{thm:bodyformula}
For every $x\in X$,
\begin{equation}
 \label{eq:main}
 \boxed{\displaystyle \Delta_x(L)=\mathcal P(d).}
\end{equation}
Under the temporary convention $d=d^\downarrow$, set $d_{n+1}=0$; then
\begin{equation}
 \label{eq:main-decreasing}
 \Delta_x(L)=\sum_{j=1}^n(d_j-d_{j+1})S_j^{(n)}.
\end{equation}
More generally, let $L^{(1)},\ldots,L^{(q)}$ be finitely many
positive-degree external products on the same $X$, with degree vectors
$d^{(\alpha)}$. For each $\alpha$, choose independently $\sigma_\alpha\in\mathfrak S_n$ which puts
$d^{(\alpha)}$ in decreasing order, and set
$L^{(\alpha),\sigma_\alpha}:=(L^{(\alpha)})^{\sigma_\alpha}$ on
$X^{\sigma_\alpha}$. Then there exists a very general,
hence nonempty, locus $G\subset\operatorname{Flag}(T_xX)$ such that for every
$Y_\bullet\in G$ and every $\alpha$,
\begin{equation}
 \label{eq:main-simultaneous}
 \Delta_{Y_\bullet}(L^{(\alpha)})
 =\Delta_x(L^{(\alpha)})
 =\Delta_{Y_\bullet^{\sigma_\alpha}}
   (L^{(\alpha),\sigma_\alpha})
 =\mathcal P(d^{(\alpha)}).
\end{equation}
The permutations $\sigma_\alpha$ are independent; no common decreasing
factor order is assumed.
\end{theorem}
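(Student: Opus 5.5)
We prove \cref{thm:bodyformula} by establishing two inclusions for a single decreasingly ordered degree vector, and then obtain the simultaneous statement by intersecting very general loci.

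\textbf{Reduction to the decreasing case.} The map $\tau_\sigma$ is an isomorphism $X\to X^\sigma$ carrying $L$ to $L^\sigma$, $x$ to $x^\sigma$, and an infinitesimal flag $Y_\bullet$ to $Y_\bullet^\sigma$. Newton--Okounkov bodies are defined intrinsically from the graded linear series and the valuation attached to the flag, so $\Delta_{Y_\bullet}(L)=\Delta_{Y_\bullet^\sigma}(L^\sigma)$ for every flag. The differential $d\tau_\sigma$ induces an isomorphism $\operatorname{Flag}(T_xX)\cong\operatorname{Flag}(T_{x^\sigma}X^\sigma)$. This isomorphism carries very general loci to very general loci. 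Hence it suffices to prove $\Delta_x(L)=\sum_j(d_j-d_{j+1})S_j^{(n)}$ when $d_1\ge\cdots\ge d_n$. Since $\mathcal P(d)$ depends only on $d^\downarrow$, this gives \eqref{eq:main}.

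\textbf{Lower bound.} Set $a_j=d_j-d_{j+1}\ge0$. Then $L=\bigotimes_j M_j^{a_j}$ formally, where $M_j$ has degree vector $(1,\dots,1,0,\dots,0)$ with $j$ ones. Only the degrees enter the generic body, because a curve $C$ carries line bundles of every positive degree up to numerical equivalence, and we use continuity of bodies in the big cone together with rational scaling. Superadditivity of Newton--Okounkov bodies gives
\[
\Delta_x(L)\supseteq\sum_j a_j\,\Delta_x(M_j),
\]
provided we work on a common very general flag. We then need $\Delta_x(M_j)\supseteq S_j^{(n)}$. The bundle $M_j$ is the pullback of the equal-degree bundle on $C_1\times\cdots\times C_j$, and it is nef but not big on $X$. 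To handle this, we perturb by adding $\epsilon$ in the last coordinates and let $\epsilon\to0$. We then use the Fulger--Lozovanu equal-degree simplex \cite[Theorem~1.1]{FLCurves} for the factor $C_1\times\cdots\times C_j$. This product embeds in the valuation coordinates exactly via the vertices $re_1+re_{n-j+r+1}$: generic flag coordinates mix the directions, and the lexicographic order places the $j$-dimensional simplex in the coordinates $1$ and $n-j+2,\dots,n$. Checking that these embedded coordinates are the right ones for a very general flag is a concrete local computation with products of sections vanishing to prescribed orders along the $C_i$-directions.

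\textbf{Upper bound.} We compare volumes. The volume of $\Delta_x(L)$ is $\vol(L)/n!=\prod d_i$. We compute $\vol(\sum_j a_jS_j^{(n)})$ as a mixed-volume polynomial in the $a_j$, using $\MV$ of the simplices, and show that it equals $\prod_i d_i$ with $d_i=\sum_{j\ge i}a_j$. A convex body containing another convex body of the same volume equals it, so this forces equality. We expect this volume identity to be the main obstacle. The $S_j^{(n)}$ lie in different coordinate subspaces, so the mixed volumes must be computed combinatorially. We would first try induction on $n$, peeling off $S_n^{(n)}$ and using a slicing along $e_1$. If that stalls, we would instead find explicit valuative inequalities cutting out the sum. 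These come from Seshadri-type bounds: sections vanishing to order $t$ at $x$ have restricted orders along fibers bounded by the $d_i$. They would give the upper inclusion directly.

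\textbf{Simultaneous realization.} For each $\alpha$ the equality holds off a countable union of proper closed subsets of $\operatorname{Flag}(T_xX)$, and likewise on $X^{\sigma_\alpha}$ pulled back via $d\tau_{\sigma_\alpha}$. A finite (indeed countable) union of such exceptional sets is still of this type. Its complement $G$ is nonempty because $\CC$ is uncountable, and $G$ satisfies \eqref{eq:main-simultaneous}. The $\sigma_\alpha$ can be chosen independently since each transport is used separately.
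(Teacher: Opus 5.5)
Your overall architecture matches the paper's: reduce to $d_1\geq\cdots\geq d_n$ by transport along $\tau_\sigma$, split $L$ into layered equal-degree pullbacks from $C_1\times\cdots\times C_j$, get $\mathcal P(d)\subseteq\Delta_x(L)$ by superadditivity, force equality by volume, and intersect very general loci for the simultaneous clause. The gap is that the step which closes the argument is only announced. That step is the identity
\[
 \vol_n\Bigl(\sum_{j=1}^n a_jS_j^{(n)}\Bigr)=\prod_{i=1}^n\Bigl(\sum_{j=i}^n a_j\Bigr).
\]
You call it the main obstacle and list strategies you ``would try'' (slicing induction, Seshadri-type inequalities), but you carry none of them out, so the upper bound is not proved. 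The missing idea is the unimodular map $T(v)=(v_1-v_2-\cdots-v_n,v_n,v_{n-1},\ldots,v_2)$. It sends $S_j^{(n)}$ to the coordinate simplex $D_j=\conv\{0,je_1,(j-1)e_2,\ldots,e_j\}$, which removes exactly the difficulty you point to, namely that the simplices live in different coordinate subspaces. For coordinate simplices $\conv\{0,c_{r1}e_1,\ldots,c_{rn}e_n\}$, the normalized mixed volume equals $\max_\sigma\prod_r c_{r,\sigma(r)}$. The lower bound comes from segments and monotonicity. The upper bound comes from assignment duality $c_{ri}\leq a_rb_i$ together with $K_r\subseteq a_r\conv\{0,b_1e_1,\ldots,b_ne_n\}$. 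For $\kappa_1\leq\cdots\leq\kappa_n$, an exchange argument shows the identity assignment is optimal, giving $\MV(D_{\kappa_1},\ldots,D_{\kappa_n})=\prod_r\max\{\kappa_r-r+1,0\}$. These are exactly the coefficients obtained by expanding $\prod_i(\sum_{j\geq i}a_j)$.

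The lower bound is also thinner than it appears. The $\epsilon$-perturbation of $M_j$ is an unnecessary detour. $\Delta_x(M_j^\epsilon)$ is a non-equal-degree body, hence unknown, and showing it contains $S_j^{(n)}$ still requires computing valuations of pulled-back sections. The Minkowski inclusion already holds for valuative bodies without bigness, since $s^rt^m$ has normalized valuation $\nu(s)/m+\nu(t)/r$. So you can work with $\Dval_{Y_\bullet}(p_j^*A_j)$ directly and take no limit. The real content is what you defer as ``a concrete local computation''. You need one flag $W_\bullet\subset T_xX$ with three properties:
\begin{itemize}
\item it is very general for $L$;
\item each $\rho_j=d_xp_j$ maps $W_j$ isomorphically onto $T_{x_{\leq j}}X_{\leq j}$;
\item each induced flag $\rho_j(W_\bullet)$ is very general for $A_j$.
\end{itemize}
Without the last property, Fulger--Lozovanu cannot be applied to the induced flag. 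Given these properties, the initial form of $p_j^*s$ is $F_t\circ\rho_j$, which is nonzero on $W_j$. Hence coordinates $2,\ldots,n-j+1$ vanish and $\nu_{Y_\bullet}(p_j^*s)=\iota_j(\nu_{Y_\bullet^{(j)}}(s))$. The paper secures this simultaneous genericity in \cref{lem:commonflag} by working on the irreducible space of ordered bases and intersecting countably many dense open conditions. Your remark that generic flag coordinates ``mix the directions'' does not address the genericity of the induced flags.
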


\begin{figure}[t]
\centering
\begin{tikzpicture}[
  >=latex,
  mainarrow/.style={->, line width=1.45pt, draw=blue!70!black},
  sidearrow/.style={->, line width=1.3pt, draw=violet!70!black},
  inputnode/.style={draw=teal!60!black, fill=teal!8, rounded corners=6pt,
    very thick, align=center, font=\footnotesize, inner sep=5pt, text width=3.35cm},
  methodnode/.style={draw=orange!75!black, fill=orange!10, rounded corners=6pt,
    very thick, align=center, font=\footnotesize, inner sep=5pt, text width=3.55cm},
  theoremnode/.style={draw=red!70!black, fill=red!9, rounded corners=8pt,
    very thick, align=center, font=\footnotesize\bfseries, inner sep=6pt, text width=4.1cm},
  cornode/.style={draw=purple!75!black, fill=purple!9, rounded corners=6pt,
    very thick, align=center, font=\footnotesize, inner sep=5pt, text width=3.7cm},
  stage/.style={font=\scriptsize\bfseries, text=gray!70!black}]

  \node[stage] at (0,6.15) {proof ingredients};
  \node[stage] at (0,4.2) {main proof};
  \node[stage] at (0,-0.95) {main conclusion};
  \node[stage] at (0,-4.15) {application};

  \node[inputnode] (A) at (-4.8,5.15) {Known equal-degree case\\[-1pt]
    \hyperref[eq:equal-degree]{\textbf{[1]}}};
  \node[inputnode] (B) at (0,5.15) {Layer decomposition\\[-1pt]
    \hyperref[eq:layer-decomposition]{\textbf{[2]}}};
  \node[inputnode] (C) at (4.8,5.15) {Compatible pullback flags\\[-1pt]
    \hyperref[eq:pullback-body]{\textbf{[3]}}};

  \node[methodnode] (D) at (0,3.35) {Minkowski inclusion\\[-1pt]
    \hyperref[eq:main-inclusion]{\textbf{[4]}}};

  \node[methodnode] (E) at (-4.8,1.55) {Assignment formula\\[-1pt]
    \hyperref[eq:assignment]{\textbf{[5]}}};
  \node[methodnode] (F) at (-4.8,-0.15) {Layered-volume identity\\[-1pt]
    \hyperref[lem:volume]{\textbf{[6]}}};

  \node[methodnode] (G) at (0,1.55) {Formula for decreasing degrees\\[-1pt]
    \hyperref[eq:main-decreasing]{\textbf{[7]}}};
  \node[methodnode] (H) at (4.8,1.55) {Simultaneous relabeling\\[-1pt]
    \hyperref[lem:simultaneous-relabeling]{\textbf{[8]}}};

  \node[theoremnode] (I) at (0,-1.65) {Main theorem\\[-1pt]
    explicit body formula\\
    for arbitrary factor order\\[-1pt]
    \hyperref[thm:bodyformula]{\textbf{[9]}}};

  \node[cornode] (K) at (0,-4.05) {Minkowski equality criterion\\common decreasing factor order\\[-1pt]
    \hyperref[cor:additivity]{\textbf{[10]}}};

  \draw[mainarrow] (A.south east) to[out=-20,in=150] (D.north west);
  \draw[mainarrow] (B) -- (D);
  \draw[mainarrow] (C.south west) to[out=-160,in=30] (D.north east);

  \draw[mainarrow] (D) -- (G);
  \draw[sidearrow] (E) -- (F);
  \draw[mainarrow] (F.east) to[out=0,in=180] (G.west);
  \draw[mainarrow] (G) -- (I);
  \draw[mainarrow] (H.south west) to[out=-145,in=25] (I.north east);

  \draw[sidearrow] (I) -- (K);
\end{tikzpicture}
\caption{Macro-level summary of the proof structure. Three proof ingredients first
combine to produce the Minkowski inclusion. A separate volume-computation branch
and the decreasing-order formula then feed into the main theorem, while
simultaneous relabeling removes the temporary ordering convention. The final
node records the main application in \cref{sec:additivity}. Each bracketed
number is a clickable link to the corresponding place in the text.}
\label{fig:proof-roadmap}
\end{figure}

\begin{figure}[t]
\centering
\begin{tikzpicture}[
  font=\small,
  axis/.style={->,gray,line width=.45pt},
  guide/.style={gray!45,dashed,line width=.4pt},
  edge/.style={blue!70!black,line width=1.05pt},
  vertex/.style={circle,fill=blue!70!black,inner sep=0pt,minimum size=3.1pt},
  every node/.style={inner sep=2pt}]
\begin{scope}[shift={(3,4.4)}]
  \node at (1.15,2.05) {$S_1^{(2)}$};
  \draw[axis] (-.1,0)--(2.75,0) node[right] {$v_1$};
  \draw[axis] (0,-.1)--(0,1.65) node[above] {$v_2$};
  \draw[gray,line width=.4pt] (2,-.04)--(2,.04);
  \node[gray,below,font=\scriptsize] at (2,-.12) {$2$};
  \draw[gray,line width=.4pt] (-.04,1)--(.04,1);
  \node[gray,left,font=\scriptsize] at (-.12,1) {$1$};
  \draw[edge,line width=1.6pt] (0,0)--(1,0);
  \node[vertex] at (0,0) {};
  \node[vertex] at (1,0) {};
  \node[below left,font=\scriptsize] at (0,-.08) {$0$};
  \node[below,font=\scriptsize] at (1,-.22) {$(1,0)$};
\end{scope}
\begin{scope}[shift={(9,4.4)}]
  \node at (1.15,2.05) {$S_2^{(2)}$};
  \draw[axis] (-.1,0)--(2.75,0) node[right] {$v_1$};
  \draw[axis] (0,-.1)--(0,1.65) node[above] {$v_2$};
  \fill[blue!10] (0,0)--(2,0)--(1,1)--cycle;
  \draw[guide] (0,1)--(1,1)--(1,0);
  \node[gray,left,font=\scriptsize] at (-.12,1) {$1$};
  \node[gray,below,font=\scriptsize] at (1,-.12) {$1$};
  \draw[edge] (0,0)--(2,0)--(1,1)--cycle;
  \foreach \p in {(0,0),(2,0),(1,1)} \node[vertex] at \p {};
  \node[below left,font=\scriptsize] at (0,-.08) {$0$};
  \node[below,font=\scriptsize] at (2,-.22) {$(2,0)$};
  \node[above,font=\scriptsize] at (1,1.18) {$(1,1)$};
\end{scope}
\begin{scope}[shift={(.55,0)},
  x={(1cm,0cm)},y={(-.3535533906cm,-.3535533906cm)},z={(0cm,1cm)}]
  \node at (1.5,0,3.05) {$S_1^{(3)}$};
  \draw[axis] (0,0,0)--(3.55,0,0) node[right] {$v_1$};
  \draw[axis] (0,0,0)--(0,1.8,0) node[below left] {$v_2$};
  \draw[axis] (0,0,0)--(0,0,2.55) node[above] {$v_3$};
  \draw[edge,line width=1.6pt] (0,0,0)--(1,0,0);
  \node[vertex] at (0,0,0) {};
  \node[vertex] at (1,0,0) {};
  \node[above left,font=\scriptsize] at (0,0,.12) {$0$};
  \node[below,font=\scriptsize] at (1,0,-.22) {$(1,0,0)$};
\end{scope}
\begin{scope}[shift={(5.50,0)},
  x={(1cm,0cm)},y={(-.3535533906cm,-.3535533906cm)},z={(0cm,1cm)}]
  \node at (1.5,0,3.05) {$S_2^{(3)}$};
  \draw[axis] (0,0,0)--(3.55,0,0) node[right] {$v_1$};
  \draw[axis] (0,0,0)--(0,1.8,0) node[below left] {$v_2$};
  \draw[axis] (0,0,0)--(0,0,2.55) node[above] {$v_3$};
  \fill[blue!10] (0,0,0)--(2,0,0)--(1,0,1)--cycle;
  \draw[guide] (0,0,1)--(1,0,1)--(1,0,0);
  \draw[edge] (0,0,0)--(2,0,0)--(1,0,1)--cycle;
  \foreach \p in {(0,0,0),(2,0,0),(1,0,1)} \node[vertex] at \p {};
  \node[above left,font=\scriptsize] at (0,0,.12) {$0$};
  \node[below,font=\scriptsize] at (2,0,-.22) {$(2,0,0)$};
  \node[above,font=\scriptsize] at (1,0,1.18) {$(1,0,1)$};
  \node[gray,font=\scriptsize] at (1.5,0,2.15) {$v_2=0$};
\end{scope}
\begin{scope}[shift={(10.45,0)},
  x={(1cm,0cm)},y={(-.3535533906cm,-.3535533906cm)},z={(0cm,1cm)}]
  \node at (1.5,0,3.05) {$S_3^{(3)}$};
  \draw[axis] (0,0,0)--(3.55,0,0) node[right] {$v_1$};
  \draw[axis] (0,0,0)--(0,1.8,0) node[below left] {$v_2$};
  \draw[axis] (0,0,0)--(0,0,2.55) node[above] {$v_3$};
  \fill[blue!10] (0,0,0)--(3,0,0)--(2,0,2)--cycle;
  \fill[blue!23] (0,0,0)--(3,0,0)--(1,1,0)--cycle;
  \draw[edge] (0,0,0)--(3,0,0)--(2,0,2)--cycle;
  \draw[edge] (0,0,0)--(1,1,0)--(3,0,0);
  \draw[edge,dashed] (1,1,0)--(2,0,2);
  \foreach \p in {(0,0,0),(3,0,0),(1,1,0),(2,0,2)} \node[vertex] at \p {};
  \node[above left,font=\scriptsize] at (0,0,.12) {$0$};
  \node[below,font=\scriptsize] at (3,0,-.22) {$(3,0,0)$};
  \node[below,font=\scriptsize] at (1,1,-.20) {$(1,1,0)$};
  \node[above,font=\scriptsize] at (2,0,2.18) {$(2,0,2)$};
\end{scope}
\end{tikzpicture}
\caption{Examples of the embedded simplices $S_j^{(n)}$: $n=2$, $j=1,2$
in the top row, and $n=3$, $j=1,2,3$ in the bottom row. The
three-dimensional panels use a cabinet projection: $v_1$ is horizontal,
$v_3$ is vertical, and the diagonal $v_2$-axis is drawn at $45^\circ$
with half scale. The dashed blue edge of the tetrahedron is hidden.
All vertex labels give the original coordinates.}
\label{fig:simplices-dim2-dim3}
\end{figure}

\Cref{fig:simplices-dim2-dim3} illustrates the simplices in low dimension,
and \cref{fig:proof-roadmap} summarizes the dependency structure of the
proof. The decomposition, equal-degree input, and compatible pullback flags
give the Minkowski inclusion; the layered-volume identity supplies the volume
comparison; simultaneous relabeling then removes the temporary ordering
assumption. The only point where valuative, rather than big,
Newton--Okounkov bodies are needed is the pullback step for bundles from
proper subproducts.

For ordinary Newton--Okounkov bodies, Wilms proved additivity on
two-dimensional subcones for suitable flags \cite{WilmsAdditivity}. This
highlights the role of the flag in additivity questions. In
\cref{sec:additivity} the explicit formula gives a sharp criterion for equality
in the Minkowski inclusion for generic infinitesimal bodies realized on a common
very general locus of flags.

The broader computation for classes involving diagonal directions is posed
separately in \cite[Problem~6.2]{FLCurves}.

\section{Preliminaries and the equal-degree input}
\label{sec:prelim}

\subsection{Valuative Newton--Okounkov bodies}

Let $Z$ be a smooth projective $r$-fold. An \emph{admissible flag} is a
chain $Z=Y_0\supset Y_1\supset\cdots\supset Y_r=\{z\}$, where each
$Y_k$ is an irreducible closed subvariety of codimension $k$, smooth at $z$.
For a nonzero section $s$ of a line bundle, $\nu_{Y_\bullet}(s)$ records
successive vanishing orders: at each step divide by a local equation
of $Y_k$ in $Y_{k-1}$ to the recorded order and restrict to $Y_k$; see
\cite[Section~1.1]{LazarsfeldMustata}. For a line bundle $B$ with a
nonzero section of some positive tensor power, write $mB:=B^{\otimes m}$
and define
\begin{equation}
 \label{eq:valuative}
 \Dval_{Y_\bullet}(B)
 =\overline{\conv}\left\{
 \frac{\nu_{Y_\bullet}(s)}m:
 m\geq1,\ 0\neq s\in H^0(Z,mB)\right\}.
\end{equation}
We use tensor notation for line bundles
and additive notation for divisor classes and intersection products.
We use the term \emph{valuative Newton--Okounkov body} also when $B$
is not big, following \cite{CHPW}. For big $B$ it is the usual
Newton--Okounkov body, denoted simply by $\Delta_{Y_\bullet}(B)$.

Write $\vol_r$ for Euclidean $r$-dimensional volume and
$\vol_Z(B)$ for the volume of a line bundle $B$ on $Z$. For big line
bundles, numerical invariance, homogeneity, and the volume theorem give
\begin{equation}
 \label{eq:volume-input}
 \vol_r\bigl(\Delta_{Y_\bullet}(B)\bigr)
 =\frac{\vol_Z(B)}{r!};
\end{equation}
see \cite{LazarsfeldMustata,KavehKhovanskii}. Numerical invariance is used
below only for big line bundles on the subproducts themselves, before
pullback. It is not asserted for arbitrary non-big valuative bodies.

Multiplication of sections gives the standard Minkowski inclusion
\begin{equation}
 \label{eq:minkowski}
 \Dval_{Y_\bullet}(B_1)+\cdots+\Dval_{Y_\bullet}(B_q)
 \subseteq\Dval_{Y_\bullet}(B_1\otimes\cdots\otimes B_q)
\end{equation}
whenever the bundles have sections in positive powers and the same flag
$Y_\bullet$ is used. This follows directly from \eqref{eq:valuative}, without a bigness
assumption: for $a=\nu(s)/m$ and $b=\nu(t)/r$, the section $s^rt^m$
has normalized valuation $a+b$. Taking closed convex hulls and iterating
proves \eqref{eq:minkowski}.

\subsection{Infinitesimal flags}

Recall that a \emph{complete linear flag} in an $r$-dimensional vector space $V$ is
a chain $0=W_0\subset\cdots\subset W_r=V$ with $\dim W_k=k$.
Here $\PP(V)$ denotes the space of one-dimensional subspaces of $V$;
projectivizing the nonzero terms gives the corresponding projective flag.

Suppose $n\geq2$, fix $x\in X$, and let
$\pi:\widetilde X=\operatorname{Bl}_xX\to X$ with exceptional divisor
$E\simeq\PP(T_xX)$. An \emph{infinitesimal admissible flag} is
\[
 \widetilde X=Y_0\supset Y_1=E\supset Y_2\supset\cdots
 \supset Y_n=\{y\},
\]
where $Y_k=\PP(W_{n-k+1})$ for $1\leq k\leq n$, and $W_\bullet$
is a complete linear flag in $T_xX$. Thus these flags are parametrized by
$\operatorname{Flag}(T_xX)$ introduced above. The first valuation coordinate
is the order of vanishing at $x$.

To relate the flag to a line bundle $B$, choose local parameters at $x$
and a local trivialization of $B$. A section $0\ne s\in H^0(X,mB)$ then
has a local representative $f=F_t+\text{higher-order terms}$, where
$t=\operatorname{ord}_x(s)$ and $F_t$ is a homogeneous polynomial of
degree $t$ on $T_xX$. After removing the order-$t$ vanishing along $E$,
the restriction of $\pi^*s$ to $E$ is represented by $F_t$; its orders
along the linear flag give the remaining valuation coordinates
\cite[Section~4.1, before Definition~4.1]{FLMinima}. Thus $B$ determines
which initial forms arise from global sections, and the flag determines
how they are valued.

For any big line bundle $B$ on $X$, generic constancy in families
\cite[Theorem~5.1]{LazarsfeldMustata} makes
$\Delta_{Y_\bullet}(\pi^*B)$ constant on a very general locus of linear
flags. This means that one excludes countably many proper Zariski-closed
subsets of the parameter space. The locus need not be Zariski open.
We denote the resulting body by $\Delta_x(B)$; the point $x$ remains fixed.
Thus $\Delta_x(B)$ denotes this common convex body, while $Y_\bullet$
denotes an actual flag. The equality
$\Delta_{Y_\bullet}(\pi^*B)=\Delta_x(B)$ means that the chosen flag
computes the generic value. See also \cite{FLCurves,KuronyaLozovanu2017}.
In expressions for infinitesimal valuations and bodies we suppress
$\pi^*$ when this causes no ambiguity. In dimension one we use the
ordinary point valuation, so $\Delta_x(L)=[0,d_1]$.

The external tensor product $L$ is ample, and therefore
\begin{equation}
 \label{eq:bodyvolume}
 \vol_n(\Delta_x(L))=\frac{L^n}{n!}=\prod_{i=1}^n d_i.
\end{equation}
For line bundles $M_i$ on $C_i$ and a point
$z\in\prod_{i=1}^j C_i$, the equal-degree input is
\begin{equation}
 \label{eq:equal-degree}
 \Delta_z(M_1\boxtimes\cdots\boxtimes M_j)=aS_j^{(j)}
 \quad\text{if }\deg M_1=\cdots=\deg M_j=a>0.
\end{equation}
For $a=1$ this is \cite[Theorem~1.1]{FLCurves}; numerical invariance and
homogeneity give the stated form for $a>0$.

\section{The main Minkowski formula}
\label{sec:proof}

\subsection{Simultaneous flags and infinitesimal pullbacks}

Let $V$ be an $n$-dimensional complex vector space. For an integer
$q\geq0$, let $V_1,\ldots,V_q$ be complex vector spaces and fix
surjective linear maps
\[
 \rho_\alpha:V\longrightarrow V_\alpha,\qquad
 r_\alpha=\dim V_\alpha\in\{1,\ldots,n\},\qquad 1\leq\alpha\leq q.
\]
Thus $\alpha$ labels the maps and their target spaces; $q$ counts the
maps and need not be bounded by $n$. Fix countably many dense
Zariski-open conditions on complete flags in $V$ and in each $V_\alpha$.
Satisfying all these conditions means lying in a chosen very general
locus, as in \cref{sec:prelim}. The conditions are fixed before the flags
are chosen; in \cref{cor:commonflag} they come from generic constancy of
the bodies.

\begin{lemma}[Simultaneous flags for linear quotients]
\label{lem:commonflag}
There is a complete flag $W_\bullet$ in $V$ satisfying the chosen flag
conditions such that, for every $\alpha$,
\[
 \rho_\alpha|_{W_{r_\alpha}}:W_{r_\alpha}\longrightarrow V_\alpha
 \quad\text{is an isomorphism},
\]
and the induced complete flag
$\rho_\alpha(W_1)\subset\cdots\subset\rho_\alpha(W_{r_\alpha})=V_\alpha$
satisfies the conditions chosen for flags in $V_\alpha$.
\end{lemma}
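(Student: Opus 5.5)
Fix a complete flag $W_\bullet$ in $V$ and consider two maps. The first is the transversality condition: $\rho_\alpha|_{W_{r_\alpha}}$ is an isomorphism, i.e.\ $W_{r_\alpha}\cap\ker\rho_\alpha=0$. The second is the induced flag map
\[
\Phi_\alpha:U_\alpha\longrightarrow\operatorname{Flag}(V_\alpha),\qquad W_\bullet\longmapsto\bigl(\rho_\alpha(W_k)\bigr)_{k\le r_\alpha},
\]
defined on the locus $U_\alpha$ where the transversality condition holds. The plan is a Baire-type argument on the irreducible variety $\operatorname{Flag}(V)$. We show that the set of good flags is the complement of countably many proper Zariski-closed subsets. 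Since $\CC$ is uncountable, such a complement is nonempty.

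First, $U_\alpha$ is Zariski open in $\operatorname{Flag}(V)$. It is the nonvanishing locus of a determinant: in local coordinates, take the $r_\alpha\times r_\alpha$ matrix of $\rho_\alpha$ applied to a basis of $W_{r_\alpha}$. It is also nonempty. Choose a complement $K'$ of $\ker\rho_\alpha$, which has dimension $r_\alpha$. Take any complete flag of $K'$ and extend it by any complete flag through $K'$ to all of $V$; this flag lies in $U_\alpha$. Because $\operatorname{Flag}(V)$ is irreducible, each $U_\alpha$ is dense. The finitely many $U_\alpha$ therefore have a dense open intersection $U$.

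Next, and this is the key step, we show that $\Phi_\alpha$ is a surjective morphism. Being a morphism is clear, since on $U_\alpha$ the images $\rho_\alpha(W_k)$ are given by regular formulas in Plücker or local coordinates. For surjectivity, take a flag $F_\bullet$ of $V_\alpha$. Choose a linear section $s:V_\alpha\to V$ of $\rho_\alpha$, set $W_k=s(F_k)$ for $k\le r_\alpha$, and extend arbitrarily to a complete flag of $V$. This $W_\bullet$ lies in $U_\alpha$ and maps to $F_\bullet$.

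Now let $O\subset\operatorname{Flag}(V_\alpha)$ be one of the chosen dense Zariski-open conditions. Its preimage $\Phi_\alpha^{-1}(O)$ is open in $U_\alpha$ by continuity, and it is nonempty by surjectivity. Hence it is dense open in $\operatorname{Flag}(V)$. Its complement in $\operatorname{Flag}(V)$ is therefore a proper Zariski-closed subset.

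Finally, collect all the bad sets. These are the complements of the conditions on $\operatorname{Flag}(V)$, the complements of the $U_\alpha$, and the complements of the sets $\Phi_\alpha^{-1}(O)$, taken over all $\alpha$ and all countably many conditions $O$. This is a countable union of proper closed subsets of the irreducible complex variety $\operatorname{Flag}(V)$, so it cannot cover $\operatorname{Flag}(V)$. To see this, note that each proper closed subset has measure zero in the analytic topology, or appeal to the standard fact that a variety over an uncountable field is not a countable union of proper closed subsets. Any $W_\bullet$ outside this union satisfies all the conditions of the lemma.

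The only delicate point I anticipate is checking that the preimage under $\Phi_\alpha$ of a dense open set is again dense and open. Surjectivity together with irreducibility of the source handles this. If $q=0$, the statement reduces to the nonemptiness of a very general locus in $\operatorname{Flag}(V)$.
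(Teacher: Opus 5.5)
Your proof is correct and follows essentially the same route as the paper. Each target condition is pulled back along a surjective algebraic map to a dense open subset of an irreducible parameter space, and you then use that a countable union of proper closed subsets cannot cover a complex variety. The only difference is the parameter space. The paper parametrizes by ordered bases in the open set $U\subset V^n$, so openness and irreducibility come directly from determinants in affine space, and flags are recovered by taking spans. You work on $\operatorname{Flag}(V)$ itself via the induced-flag morphism $\Phi_\alpha$, which uses the equally standard facts that $\operatorname{Flag}(V)$ is irreducible and $\Phi_\alpha$ is regular on $U_\alpha$.
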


\begin{proof}
We will find one ordered basis of $V$ satisfying all requirements, then
take its successive spans.

To parametrize the choices, fix reference bases in $V$ and each
$V_\alpha$. Give the Cartesian product
$V^n=V\times\cdots\times V\simeq\CC^{n^2}$ its Zariski topology.
The ordered bases of $V$ form the nonempty open set
\[
 U=\{(v_1,\ldots,v_n)\in V^n:\det[v_1\ \cdots\ v_n]\ne0\},
\]
where brackets denote matrices of coordinate columns in the reference
bases. Since $V^n$ is irreducible, so is $U$.

For each $\alpha$, the image vectors form a basis precisely when
\[
 \det[\rho_\alpha(v_1)\ \cdots\ \rho_\alpha(v_{r_\alpha})]\ne0.
\]
This $r_\alpha\times r_\alpha$ determinant is a polynomial in the
coordinates of the $v_i$, so its zero set is closed and its complement
is open in $U$. On this open set, the map to ordered bases of $V_\alpha$
is surjective: lift any target basis and extend its independent lifts
to a basis of $V$. In particular, the open set is nonempty.

To impose the flag conditions, taking successive spans is algebraic,
and every complete flag has an adapted basis: a basis whose first $k$
vectors span its $k$-dimensional term. The preceding lifting argument
therefore makes each required source or target flag condition pull back
to a nonempty open subset of $U$, which is dense by irreducibility.

Finally, these countably many dense open conditions have a common point
by \cite[Lemma~10]{BanerjeeGuletskii}, since $\CC$ is uncountable.
For a basis at that point, set $W_k=\operatorname{span}(v_1,\ldots,v_k)$,
with $W_0=0$. Each restriction sends a basis of $W_{r_\alpha}$ to a basis
of $V_\alpha$, so it is an isomorphism; the source and image flags satisfy
all the chosen conditions.
\end{proof}

For the geometric application, put
\[
 X_{\leq j}=C_1\times\cdots\times C_j,\qquad
 x_{\leq j}=(x_1,\ldots,x_j),\qquad
 p_j:X\longrightarrow X_{\leq j}.
\]
Here $1\leq j\leq n$, and $p_j$ is the projection onto the first $j$
factors. Write
$\rho_j=d_xp_j:T_xX\to T_{x_{\leq j}}X_{\leq j}$ for its surjective
tangent map. Under $T_xX=\bigoplus_{i=1}^n T_{x_i}C_i$, this is the
projection onto the first $j$ summands. Fix an index set $J\subseteq\{1,\ldots,n\}$, a big line
bundle $A_j$ on $X_{\leq j}$ for each $j\in J$, and a finite family
$\mathcal B$ of big line bundles on $X$. Either family may be empty.
We require genericity for every $B\in\mathcal B$ on $X$ and for $A_j$
on each $X_{\leq j}$. Define the linear embedding
\[
 \iota_j:\RR^j\longrightarrow\RR^n,\qquad
 (t,u_2,\ldots,u_j)\longmapsto
 (t,\underbrace{0,\ldots,0}_{n-j},u_2,\ldots,u_j).
\]
For $j=1$, this means $\iota_1(t)=(t,0,\ldots,0)$.

\begin{corollary}[Compatible infinitesimal pullbacks]
\label{cor:commonflag}
For these data, there exists a complete linear flag $W_\bullet$ in
$T_xX$ such that
\[
 \rho_j|_{W_j}:W_j\xrightarrow{\ \sim\ }T_{x_{\leq j}}X_{\leq j}
 \qquad(j\in J),
\]
and the associated infinitesimal flag $Y_\bullet$ over $x$ and the flags
$Y_\bullet^{(j)}$ over $x_{\leq j}$ induced by
$\rho_j(W_1)\subset\cdots\subset\rho_j(W_j)$ satisfy
\[
 \Delta_{Y_\bullet}(B)=\Delta_x(B)
 \quad\text{for every }B\in\mathcal B,
\]
and, for every $j\in J$,
\begin{equation}
 \label{eq:pullback-body}
 \Dval_{Y_\bullet}(p_j^*A_j)
 =\iota_j\bigl(\Delta_{Y_\bullet^{(j)}}(A_j)\bigr)
 =\iota_j\bigl(\Delta_{x_{\leq j}}(A_j)\bigr).
\end{equation}
In dimension one the corresponding flag is the point valuation.
\end{corollary}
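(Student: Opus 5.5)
We will obtain \cref{cor:commonflag} from \cref{lem:commonflag} with $V=T_xX$, the quotients $V_j=T_{x_{\leq j}}X_{\leq j}$ and $\rho_j=d_xp_j$ for $j\in J$. The flag conditions are fixed first. On $V$ we impose, for each $B\in\mathcal B$, the countably many dense open conditions from generic constancy \cite[Theorem~5.1]{LazarsfeldMustata} that give $\Delta_{Y_\bullet}(B)=\Delta_x(B)$. On each $V_j$ we impose the analogous conditions for $A_j$ on $X_{\leq j}$, so that $\Delta_{Y^{(j)}_\bullet}(A_j)=\Delta_{x_{\leq j}}(A_j)$. For $j=1$ there is no condition, since the body is the point valuation. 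The lemma then gives a flag $W_\bullet$ with $\rho_j|_{W_j}$ an isomorphism for all $j\in J$, satisfying every condition. This proves the first display and the second equality in \eqref{eq:pullback-body}. It remains to prove the first equality in \eqref{eq:pullback-body}, which is a statement about one fixed flag.

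\textbf{Computing the valuation.} Choose local parameters $z_i$ on $C_i$ at $x_i$. Then $T_xX$ has coordinates $z_1,\dots,z_n$, and $\rho_j$ forgets $z_{j+1},\dots,z_n$. Let $s\in H^0(X,m\,p_j^*A_j)$ be nonzero. Since $H^0(X,p_j^*mA_j)=H^0(X_{\leq j},mA_j)$ ($p_j$ has connected fibres and is flat), we have $s=p_j^*s'$. The local representative of $s$ is $f'(z_1,\dots,z_j)$, so $\operatorname{ord}_x s=\operatorname{ord}_{x_{\leq j}}s'=t$. The initial form is $F_t\circ\rho_j$, where $F_t$ is the initial form of $s'$.

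The remaining coordinates are read off along the flag $\PP(W_{n-1})\supset\cdots\supset\PP(W_1)$. Dehomogenize at the point $\PP(W_1)$. The degree-$t$ polynomial $F_t\circ\rho_j$ on $V$ is constant along $\ker\rho_j$, and $\ker\rho_j$ is complementary to $W_j$. Pick a basis $w_1,\dots,w_n$ adapted to $W_\bullet$ and change it so that $w_{j+1},\dots,w_n\in\ker\rho_j$. This is allowed because $W_k\cap\ker\rho_j=0$ for $k\leq j$, so for $k>j$ we may write $W_k=W_j\oplus(W_k\cap\ker\rho_j)$. In the dual coordinates $y_1,\dots,y_n$, the form $F_t\circ\rho_j$ depends only on $y_1,\dots,y_j$.

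The successive vanishing orders are taken along $y_n,y_{n-1},\dots,y_2$: the $k$-th coordinate uses the hyperplane $\PP(W_{n-k+1})\subset\PP(W_{n-k+2})$, cut out by $y_{n-k+2}$. The form does not involve $y_n,\dots,y_{j+1}$, so the corresponding $n-j$ coordinates are $0$. The orders along $y_j,\dots,y_2$ are exactly the infinitesimal valuation of $s'$ with respect to the induced flag $\rho_j(W_\bullet)$ in $V_j$, computed in the coordinates $\rho_j(w_1),\dots,\rho_j(w_j)$.

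Hence $\nu_{Y_\bullet}(s)=\iota_j(\nu_{Y^{(j)}_\bullet}(s'))$. Every section of every power arises this way, and $\iota_j$ is linear, so taking closed convex hulls in \eqref{eq:valuative} gives $\Dval_{Y_\bullet}(p_j^*A_j)=\iota_j(\Delta_{Y^{(j)}_\bullet}(A_j))$. Since $A_j$ is big, the right-hand side is the usual body.

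The main obstacle is the coordinate bookkeeping in the second step. One must check that the recorded vanishing orders are computed relative to the chosen linear flag, independently of the coordinate choice, after the successive restrictions. One must also match the ordering in $\iota_j$: zeros occupy positions $2,\dots,n-j+1$, and the $V_j$-data occupy the last $j-1$ positions. I would handle both by working with adapted coordinates as above, so that each restriction is simply setting the last remaining variable to zero.
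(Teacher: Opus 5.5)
Your proof is correct and follows the paper's argument. The flag comes from \cref{lem:commonflag} with generic-constancy conditions imposed on $T_xX$ and on each $T_{x_{\leq j}}X_{\leq j}$. Every section of $m\,p_j^*A_j$ is a pullback, and the initial form $F_t\circ\rho_j$ produces $n-j$ zeros followed by the target valuation.

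The differences are only in the details. You identify sections via the projection formula ($p_{j*}\cO_X=\cO_{X_{\leq j}}$) rather than K\"unneth. You also exhibit the zeros through a $W_\bullet$-adapted basis whose last $n-j$ vectors lie in $\ker\rho_j$, where the paper instead observes that $F_t\circ\rho_j$ is not identically zero on $W_j$.
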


\begin{proof}
We choose compatible generic flags, compare valuations on the blow-ups,
and use K\"unneth to identify all sections.

For the flag choice, apply \cref{lem:commonflag} with source $T_xX$,
targets $T_{x_{\leq j}}X_{\leq j}$, and maps $\rho_j$ for $j\in J$
($q=|J|$, with target dimension $j$). Generic constancy
\cite[Theorem~5.1]{LazarsfeldMustata} supplies the source flag conditions
from the bundles $B\in\mathcal B$ and the target flag conditions from
the $A_j$. The lemma makes these conditions and the stated tangent-space
isomorphisms hold simultaneously.

For the valuation comparison, assume $n\geq2$, the curve case being
immediate. Let $F_t$ be the initial homogeneous term of a local
representative of $0\ne s\in H^0(X_{\leq j},mA_j)$, with
$t=\operatorname{ord}_{x_{\leq j}}(s)$. On $\widetilde X$, the section
$\pi^*p_j^*s$ has order $t$ along $E$; after removing this vanishing,
its restriction to $E$ is represented by $F_t\circ\rho_j$
\cite[Section~4.1, before Definition~4.1]{FLMinima}. This polynomial is
not identically zero on $W_j$ because $\rho_j|_{W_j}$ is an isomorphism.
The successive restrictions from $\PP(W_n)$ down to $\PP(W_j)$ therefore
contribute $n-j$ zeros, and the remaining flag is identified with the
target flag by this isomorphism. Thus, writing
$\nu_{Y_\bullet^{(j)}}(s)=(t,u_2,\ldots,u_j)$, we obtain
\begin{equation}
 \label{eq:pullback-values}
 \nu_{Y_\bullet}(p_j^*s)
 =(t,\underbrace{0,\ldots,0}_{n-j},u_2,\ldots,u_j)
 =\iota_j\bigl(\nu_{Y_\bullet^{(j)}}(s)\bigr).
\end{equation}
The inserted zeros occupy coordinates $2,\ldots,n-j+1$; the first
coordinate remains $t$.

Finally, $p_j^*A_j$ is the external tensor product of $A_j$ with the
trivial bundles on the remaining curves. K\"unneth in cohomological
degree zero \cite[Tag~0BED]{Stacks} and $H^0(C_i,\cO_{C_i})=\CC$ give,
for every $m\geq1$,
\[
 \begin{aligned}
 H^0(X,mp_j^*A_j)
 &\simeq H^0(X_{\leq j},mA_j)\otimes_{\CC}
 \bigotimes_{i=j+1}^n H^0(C_i,\cO_{C_i})\\
 &\simeq H^0(X_{\leq j},mA_j).
 \end{aligned}
\]
Here $s\otimes1\otimes\cdots\otimes1$ corresponds to $p_j^*s$, so
\eqref{eq:pullback-values} covers all sections upstairs. Normalizing and
taking closed convex hulls proves the first equality in
\eqref{eq:pullback-body}; the chosen genericity gives the remaining
assertions.
\end{proof}

\begin{lemma}[Simultaneous generic relabeling]
\label{lem:simultaneous-relabeling}
Let $B^{(1)},\ldots,B^{(q)}$ be positive-degree external products on $X$.
Use the factor-permutation notation introduced before \cref{thm:bodyformula}.
For a section $s$ of a line bundle $B$, write
$s^\sigma:=(\tau_\sigma^{-1})^*s$, and let
\[
 \Phi_\sigma:\operatorname{Flag}(T_xX)\longrightarrow
 \operatorname{Flag}(T_{x^\sigma}X^\sigma)
\]
be the isomorphism induced by the differential of $\tau_\sigma$.
For each $\alpha$, choose
$\sigma_\alpha\in\mathfrak S_n$ and set
$B^{(\alpha),\sigma_\alpha}:=(B^{(\alpha)})^{\sigma_\alpha}$. Then there
is a very general, hence nonempty, locus $G\subset\operatorname{Flag}(T_xX)$ such that for
every $Y_\bullet\in G$ and every $\alpha$,
\begin{equation}
 \label{eq:simultaneous-relabeling}
 \Delta_{Y_\bullet}(B^{(\alpha)})
 =\Delta_x(B^{(\alpha)})
 =\Delta_{Y_\bullet^{\sigma_\alpha}}
   (B^{(\alpha),\sigma_\alpha})
 =\Delta_{x^{\sigma_\alpha}}
   (B^{(\alpha),\sigma_\alpha}).
\end{equation}
In particular, the different bundles may be reordered independently while
being represented by one common flag $Y_\bullet$ on the original product.
\end{lemma}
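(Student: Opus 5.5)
The plan is to show that relabeling factors is an isomorphism of all the data defining an infinitesimal Newton--Okounkov body. Then the generic-constancy loci for $B^{(\alpha)}$ on $X$ and for $B^{(\alpha),\sigma_\alpha}$ on $X^{\sigma_\alpha}$ correspond under $\Phi_{\sigma_\alpha}$. Finally, we intersect countably many such loci.

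\textbf{Step 1: invariance of a single body under relabeling.} The map $\tau_\sigma:X\to X^\sigma$ is an isomorphism of varieties sending $x$ to $x^\sigma$. It therefore lifts to an isomorphism $\widetilde\tau_\sigma:\operatorname{Bl}_xX\to\operatorname{Bl}_{x^\sigma}X^\sigma$. This lift carries the exceptional divisor $E=\PP(T_xX)$ to $\PP(T_{x^\sigma}X^\sigma)$ via $\PP(d_x\tau_\sigma)$, and it carries each projective flag term $\PP(W_k)$ to $\PP(d_x\tau_\sigma(W_k))$. Consequently the infinitesimal flag $Y_\bullet$ is sent to $Y_\bullet^\sigma$, whose linear flag is $\Phi_\sigma(W_\bullet)$.

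The assignment $s\mapsto s^\sigma=(\tau_\sigma^{-1})^*s$ is a linear isomorphism $H^0(X,mB)\to H^0(X^\sigma,mB^\sigma)$ for every $m$. Valuations are defined intrinsically by orders of vanishing along the flag terms. Since isomorphisms preserve these orders, we get $\nu_{Y_\bullet}(s)=\nu_{Y^\sigma_\bullet}(s^\sigma)$ coordinatewise. This can be checked on the initial form $F_t$ as in \cref{sec:prelim}: the initial form of $s^\sigma$ is $F_t\circ(d_x\tau_\sigma)^{-1}$, and the flag is transported by the same linear map. Taking normalized closed convex hulls gives
\[
\Delta_{Y_\bullet}(B)=\Delta_{\Phi_\sigma(Y_\bullet)}(B^\sigma)
\]
for every flag, with no genericity needed. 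The key point is that the coordinates of $\RR^n$ are indexed by flag steps, not by factors, so no permutation of coordinates appears.

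\textbf{Step 2: choosing the very general locus.} For each $\alpha$, generic constancy \cite[Theorem~5.1]{LazarsfeldMustata} gives two very general loci:
\begin{itemize}
\item $G_\alpha\subset\operatorname{Flag}(T_xX)$, on which $\Delta_{Y_\bullet}(B^{(\alpha)})=\Delta_x(B^{(\alpha)})$;
\item $G'_\alpha\subset\operatorname{Flag}(T_{x^{\sigma_\alpha}}X^{\sigma_\alpha})$, on which the body of $B^{(\alpha),\sigma_\alpha}$ equals $\Delta_{x^{\sigma_\alpha}}(B^{(\alpha),\sigma_\alpha})$.
\end{itemize}
Here $B^{(\alpha),\sigma_\alpha}$ is again an external product, hence ample and big. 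Since $\Phi_{\sigma_\alpha}$ is an isomorphism of varieties, $\Phi_{\sigma_\alpha}^{-1}(G'_\alpha)$ is again very general: the preimage of a proper Zariski-closed set is proper and closed.

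Set
\[
G=\bigcap_\alpha\bigl(G_\alpha\cap\Phi_{\sigma_\alpha}^{-1}(G'_\alpha)\bigr).
\]
This is a finite intersection of very general loci, so its complement is still a countable union of proper closed subsets. Because $\operatorname{Flag}(T_xX)$ is irreducible and $\CC$ is uncountable, $G$ is nonempty, as in \cite[Lemma~10]{BanerjeeGuletskii}.

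\textbf{Step 3: the chain of equalities.} For $Y_\bullet\in G$, the first equality in \eqref{eq:simultaneous-relabeling} holds by membership in $G_\alpha$. The second holds by Step 1. The third holds because $Y_\bullet^{\sigma_\alpha}\in G'_\alpha$.

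As a byproduct, since $G_\alpha$ and $\Phi_{\sigma_\alpha}^{-1}(G'_\alpha)$ intersect, the generic bodies $\Delta_x(B^{(\alpha)})$ and $\Delta_{x^{\sigma_\alpha}}(B^{(\alpha),\sigma_\alpha})$ coincide.

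\textbf{Main obstacle.} The step needing most care is Step 1: one must confirm that the transported flag $Y^\sigma_\bullet$ is exactly the image flag under $\widetilde\tau_\sigma$, so that the valuation vectors agree coordinatewise. I would verify this directly on the blow-up using functoriality of blowing up a point under isomorphisms, rather than through local coordinates.
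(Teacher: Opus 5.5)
Your proposal is correct and follows essentially the same route as the paper: intersect the generic-constancy loci $G_\alpha$ with the preimages $\Phi_{\sigma_\alpha}^{-1}(G'_\alpha)$, then use that valuation vectors are unchanged when $\tau_{\sigma_\alpha}$ transports both the bundle and the flag. Your Step 1 spells out, via functoriality of the blow-up and the transformed initial form, the transport invariance that the paper states in one line, and you correctly note that it holds for every flag without genericity.
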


\begin{proof}
For each $\alpha$, choose a very general locus $G_\alpha$ on which
$\Delta_{Y_\bullet}(B^{(\alpha)})=\Delta_x(B^{(\alpha)})$, and a very
general locus $G_\alpha^{\sigma_\alpha}$ on the reordered flag variety on
which the generic body of $B^{(\alpha),\sigma_\alpha}$ is computed.
Since $\Phi_{\sigma_\alpha}$ is an isomorphism,
$\Phi_{\sigma_\alpha}^{-1}(G_\alpha^{\sigma_\alpha})$ is again very
general. Hence
\[
 G:=\bigcap_{\alpha=1}^q
 \Bigl(G_\alpha\cap
 \Phi_{\sigma_\alpha}^{-1}(G_\alpha^{\sigma_\alpha})\Bigr)
\]
is very general and nonempty: its complement is a countable union of proper
Zariski-closed subsets of the irreducible flag variety, so the uncountability
argument used in \cref{lem:commonflag} applies.

For $Y_\bullet\in G$, the factor-permutation isomorphism transports both the
bundle and the flag. Successive orders of vanishing are invariant under this
simultaneous transport, so
\[
 \nu_{Y_\bullet}(s)
 =\nu_{Y_\bullet^{\sigma_\alpha}}(s^{\sigma_\alpha}),
 \qquad
 \Delta_{Y_\bullet}(B^{(\alpha)})
 =\Delta_{Y_\bullet^{\sigma_\alpha}}
   (B^{(\alpha),\sigma_\alpha}).
\]
Together with the defining properties of $G_\alpha$ and
$G_\alpha^{\sigma_\alpha}$ this gives \eqref{eq:simultaneous-relabeling}.
\end{proof}

\subsection{A mixed-volume identity}

The volume comparison in the main proof reduces to \cref{lem:volume}.
We use normalized mixed volume, $\MV(K,\ldots,K)=n!\vol_n(K)$.
Equivalently, $\MV(K_1,\ldots,K_n)$ is the coefficient of
$t_1\cdots t_n$ in $\vol_n(\sum_{i=1}^n t_iK_i)$.
We use its standard multilinearity, monotonicity, and continuity
\cite[Chapter~5]{Schneider}. For two distinct coordinate simplices
the mixed-volume formula is \cite[Corollary~4.3]{Sadovsky}; the argument
below uses assignment duality to obtain the arbitrary-tuple form
needed here.

For an $n\times n$ matrix $(c_{ri})$ of nonnegative real numbers, set
$K_r=\conv\{0,c_{r1}e_1,\ldots,c_{rn}e_n\}$ for $1\leq r\leq n$.
\begin{lemma}[Coordinate-simplex assignment formula]
\label{lem:assignment}
The normalized mixed volume of these simplices is
\begin{equation}
 \label{eq:assignment}
 \MV(K_1,\ldots,K_n)
 =\max_{\sigma\in\mathfrak S_n}\prod_{r=1}^n c_{r,\sigma(r)}.
\end{equation}
\end{lemma}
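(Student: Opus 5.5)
The plan is to prove the two inequalities in \eqref{eq:assignment} separately. The lower bound will come from monotonicity applied to coordinate segments. The upper bound will come from monotonicity applied to a common rescaled coordinate simplex, with the rescaling supplied by assignment (LP) duality. A preliminary reduction handles zero entries. Both sides of \eqref{eq:assignment} are continuous in the matrix $(c_{ri})$: the left side by continuity of mixed volume in the Hausdorff metric, since the $K_r$ vary continuously; the right side as a maximum of finitely many polynomials. So I would replace $c_{ri}$ by $c_{ri}+\varepsilon$, prove the formula for strictly positive matrices, and let $\varepsilon\to0$.

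\emph{Lower bound.} Fix $\sigma\in\mathfrak S_n$. Each $K_r$ contains the segment $I_r=[0,c_{r,\sigma(r)}e_{\sigma(r)}]$, so monotonicity gives $\MV(K_1,\ldots,K_n)\geq\MV(I_1,\ldots,I_n)$. For segments, $\vol_n(\sum t_rI_r)$ is the volume of a parallelepiped, namely $t_1\cdots t_n\,|\det[v_1\ \cdots\ v_n]|$ with $v_r=c_{r,\sigma(r)}e_{\sigma(r)}$. Hence the normalized mixed volume of the $I_r$ is $|\det|=\prod_rc_{r,\sigma(r)}$. Maximizing over $\sigma$ gives $\geq$.

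\emph{Upper bound.} Set $a_{ri}=\log c_{ri}$, which is finite after the perturbation. Linear-programming duality for the assignment problem (Egerváry's theorem, equivalently Birkhoff's theorem that the doubly stochastic polytope has permutation matrices as vertices) gives reals $\alpha_r,\beta_i$ with
\[
a_{ri}\leq\alpha_r+\beta_i \quad\text{for all } r,i,
\qquad
\sum_r\alpha_r+\sum_i\beta_i=\max_\sigma\sum_ra_{r,\sigma(r)}.
\]
Put $u_r=e^{\alpha_r}$ and $w_i=e^{\beta_i}$, and let $\Delta_w=\conv\{0,w_1e_1,\ldots,w_ne_n\}$. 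Then $c_{ri}\leq u_rw_i$ for all $r,i$. Since $0\in u_r\Delta_w$ and $u_r\Delta_w$ is convex, this gives $c_{ri}e_i\in u_r\Delta_w$, and therefore $K_r\subseteq u_r\Delta_w$. Monotonicity and multilinearity then give
\[
\MV(K_1,\ldots,K_n)\leq\prod_ru_r\cdot\MV(\Delta_w,\ldots,\Delta_w)
=\prod_ru_r\cdot n!\vol_n(\Delta_w)=\prod_ru_r\prod_iw_i .
\]
By the duality equality, the right-hand side is $\exp\bigl(\max_\sigma\sum_ra_{r,\sigma(r)}\bigr)=\max_\sigma\prod_rc_{r,\sigma(r)}$.

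I expect the upper bound to be the main obstacle, specifically producing the dual potentials. I would cite the assignment-problem duality directly, noting that it applies to arbitrary real weights. That is exactly why I perturb to positive entries first: zero entries would give $\log0$ weights, which the duality cannot handle directly. The remaining steps are routine uses of the properties of $\MV$ recalled before the lemma.
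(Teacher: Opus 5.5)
Your proposal is correct and matches the paper's proof step for step. The lower bound uses monotonicity with the segments $[0,c_{r,\sigma(r)}e_{\sigma(r)}]\subseteq K_r$. The upper bound applies assignment duality to $(\log c_{ri})$ to get $c_{ri}\leq u_rw_i$, hence $K_r\subseteq u_r\Delta_w$ and $\MV(K_1,\ldots,K_n)\leq\prod_r u_r\prod_i w_i$. Zero entries are handled by perturbing to $c_{ri}+\varepsilon$ and using continuity. Your parallelepiped computation of the segment mixed volume and your explicit appeal to Birkhoff/Egerv\'ary strong duality only fill in details the paper leaves implicit.
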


\begin{proof}
We match lower and upper bounds by monotonicity and assignment duality.
Put $M=\max_\sigma\prod_r c_{r,\sigma(r)}$. The segments
$[0,c_{r,\sigma(r)}e_{\sigma(r)}]\subseteq K_r$ give
$\MV(K_1,\ldots,K_n)\geq M$.

For the upper bound, assume first that $c_{ri}>0$. Assignment duality
applied to $(\log c_{ri})$
\cite[Corollary~2.6a and Theorem~3.7]{Schrijver} gives positive numbers
$a_r,b_i$ such that
\[
 c_{ri}\leq a_rb_i,\qquad \prod_r a_r\prod_i b_i=M.
\]
Set $R=\conv\{0,b_1e_1,\ldots,b_ne_n\}$. Then $K_r\subseteq a_rR$, so
\[
 \MV(K_1,\ldots,K_n)
 \leq\Bigl(\prod_r a_r\Bigr)n!\vol_n(R)
 =\prod_r a_r\prod_i b_i=M.
\]
For zero entries, replace $c_{ri}$ by $c_{ri}+\varepsilon$, apply the
positive case, and let $\varepsilon\to0^+$; continuity then proves the
formula.
\end{proof}

For $1\leq j\leq n$, put
$D_j=\conv\{0,je_1,(j-1)e_2,\ldots,e_j\}\subset\RR^n$.

\begin{lemma}[Volume of the layered sum]
\label{lem:volume}
For any nonnegative real coefficients $\lambda_1,\ldots,\lambda_n$,
\begin{equation}
 \label{eq:layer-volume}
 \vol_n\left(\sum_{j=1}^n\lambda_jD_j\right)
 =\prod_{i=1}^n\left(\sum_{j=i}^n\lambda_j\right).
\end{equation}
\end{lemma}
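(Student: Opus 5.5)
Expand the volume as a polynomial in the $\lambda_j$ via mixed volumes, and evaluate each mixed volume with \cref{lem:assignment}. By multilinearity,
\[
 n!\vol_n\Bigl(\sum_j\lambda_jD_j\Bigr)=\sum_{(j_1,\ldots,j_n)}\lambda_{j_1}\cdots\lambda_{j_n}\MV(D_{j_1},\ldots,D_{j_n}),
\]
where the sum runs over all ordered $n$-tuples. Each $D_j$ is a coordinate simplex $\conv\{0,c_{j1}e_1,\ldots,c_{jn}e_n\}$ with $c_{ji}=j-i+1$ for $i\le j$ and $c_{ji}=0$ for $i>j$. Hence \cref{lem:assignment} applies with rows $r\mapsto j_r$ and gives
\[
 \MV(D_{j_1},\ldots,D_{j_n})=\max_{\sigma}\prod_r c_{j_r,\sigma(r)}.
\]

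The first step is to evaluate this maximum. A permutation $\sigma$ gives a nonzero product only if $\sigma(r)\le j_r$ for all $r$. Sort the tuple increasingly as $j_{(1)}\le\cdots\le j_{(n)}$. By Hall's condition, a nonzero assignment exists iff $j_{(i)}\ge i$ for all $i$.

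Next we identify the maximizer. The claim is that the optimal assignment is the sorted one, matching the $i$-th smallest index to coordinate $i$, which gives the value $\prod_i (j_{(i)}-i+1)$. The matrix entries $c_{ji}=(j-i+1)_+$ satisfy a log-supermodularity (Monge-type) property in $(j,i)$: for $j<j'$ and $i<i'$ with $i'\le j$,
\[
 (j-i+1)(j'-i'+1)\ge (j-i'+1)(j'-i+1),
\]
since the difference equals $(j'-j)(i'-i)\ge0$. Boundary cases with zero entries are handled separately. An exchange argument then shows that uncrossing any inversion does not decrease the product, so the sorted assignment is optimal. I expect this verification, especially with the zero entries, to be the main technical step.

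Then we recombine the terms. Grouping ordered tuples by their multiset, the quantity $n!\vol_n$ becomes
\[
 \sum_{\text{multisets}}\frac{n!}{\prod_k m_k!}\,\prod_k\lambda_k^{m_k}\prod_i (j_{(i)}-i+1),
\]
and we must show this equals $n!\prod_i\Lambda_i$, where $\Lambda_i=\sum_{j\ge i}\lambda_j$. For this it is cleaner to avoid the combinatorics and compare polynomial identities differently. Instead, I would use an alternative route: show directly that
\[
 P=\sum_j\lambda_jD_j=\Bigl\{v\ge0:\ \sum_{k\ge i}v_k\le\sum_{k\ge i}(k-i+1)\ldots\Bigr\}
\]
is a polytope combinatorially equivalent to a product-like ``staircase''. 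Concretely, one substitutes $w_i=\sum_{k\ge i}v_k$. The vertices $c e_c$ pattern, with $D_j$ contained in $\{w_i\le j-i+1\}$, suggests that after the unimodular change of variables $w$, each $D_j$ becomes
\[
 \conv\{0,\ (j-i+1)\text{-staircase}\}=\{\,j\ge w_1\ge w_2+1\ge\cdots\,\},
\]
a simplex of orthoscheme type. Sums of orthoschemes sharing the same chamber order are described by the same inequalities with summed right-hand sides. Specifically, $D_j$ in $w$-coordinates is the set $\{w_1\ge w_2\ge\cdots\ge w_n\ge0,\ w_i-w_{i+1}\le\ldots\}$.

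If that fails, fall back to the mixed-volume expansion and prove the combinatorial identity by induction on $n$. Compare the coefficients of $\lambda_n$, noting that $D_n$ is the only simplex reaching $e_n$: the coefficient of terms containing $\lambda_n$ in $\prod_i\Lambda_i$ factors, because $\lambda_n$ appears in every $\Lambda_i$.
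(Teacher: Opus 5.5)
Your first half is correct and is the paper's argument: the mixed-volume expansion, the evaluation $\MV(D_{j_1},\ldots,D_{j_n})=\prod_i(j_{(i)}-i+1)_+$ via the assignment lemma, the Hall criterion, and the exchange inequality with difference $(j'-j)(i'-i)\ge0$. Zero entries cause no difficulty. If the crossed product $c_{j,i'}c_{j',i}$ vanishes, the inequality is trivial. If it is nonzero, then $i<i'\le j<j'$ and all four entries are positive. The gap is the recombination step, which carries the real content of the identity and which you never carry out. You reduce to showing that, with $\Lambda_i=\sum_{j\ge i}\lambda_j$, the coefficient of $\prod_k\lambda_k^{m_k}$ in $\prod_i\Lambda_i$ equals $\frac{1}{\prod_k m_k!}\prod_i(j_{(i)}-i+1)_+$, and then you do not prove it.

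Neither replacement route closes this step.

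\textbf{Orthoscheme route.} It fails as written. Under $w_i=\sum_{k\ge i}v_k$ the vertex $(j-i+1)e_i$ goes to $(j-i+1)(e_1+\cdots+e_i)$. The staircase scales $j,j-1,\ldots,1$ therefore decrease, so $D_j$ is not an orthoscheme. For $j=n=2$ you get $\{w_1\ge w_2\ge0,\ w_1+w_2\le2\}$, whereas your set $\{2\ge w_1\ge w_2+1\ge\cdots\}$ does not even contain $0$. Moreover, the $D_j$ with $j<n$ are lower-dimensional with different normal fans. ``Summed right-hand sides'' would then require a common refinement, followed by a separate volume computation.

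\textbf{Induction on $n$.} This is only a hint; nothing is actually carried out.

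\textbf{The missing count.} It is a short direct count, and it is how the paper finishes. Expanding $\prod_{i=1}^n\Lambda_i$ means choosing, for each factor $i$, an index $f(i)\ge i$. So the coefficient of $\prod_k\lambda_k^{m_k}$ is the number of maps $f$ with $f(i)\ge i$ and $|f^{-1}(k)|=m_k$. Label the multiset entries $\kappa_1\le\cdots\le\kappa_n$ and place them into distinct factors in increasing order, entry $r$ going to a factor numbered at most $\kappa_r$. The $r-1$ factors already used are all numbered at most $\kappa_{r-1}\le\kappa_r$, so there are exactly $(\kappa_r-r+1)_+$ choices at step $r$. Dividing $\prod_r(\kappa_r-r+1)_+$ by $\prod_k m_k!$ forgets the labels. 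This gives precisely the mixed-volume coefficient, which proves the identity.
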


\begin{proof}
We use the standard mixed-volume expansion \cite[Chapter~5]{Schneider}
and compare coefficients. For $1\leq\kappa_1\leq\cdots\leq\kappa_n\leq n$,
\cref{lem:assignment} applied to $c_{ri}=\max\{\kappa_r-i+1,0\}$ gives
\begin{equation}
 \label{eq:layer-mv}
 \MV(D_{\kappa_1},\ldots,D_{\kappa_n})
 =\prod_{r=1}^n\max\{\kappa_r-r+1,0\}.
\end{equation}
Indeed, inverted positive assignments can be
interchanged without decreasing their product, since for $r<s$ and $i<j$,
\[
 (\kappa_r-i+1)(\kappa_s-j+1)
 -(\kappa_r-j+1)(\kappa_s-i+1)
 =(\kappa_s-\kappa_r)(j-i)\geq0.
\]
Thus the identity assignment is optimal. If $\kappa_r<r$ for some $r$,
the first $r$ rows have fewer than $r$ available columns, and both sides
of \eqref{eq:layer-mv} vanish.

To compare coefficients, let $m_j$ count the occurrences of $j$ among
the $\kappa_r$. The coefficient of $\prod_j\lambda_j^{m_j}$ on the left
of \eqref{eq:layer-volume} is
\[
 \frac{\prod_{r=1}^n\max\{\kappa_r-r+1,0\}}{\prod_jm_j!}.
\]
The same coefficient occurs on the right: assign the labelled indices
$\kappa_r$, in increasing order, to distinct factors numbered at most
$\kappa_r$. There are $\max\{\kappa_r-r+1,0\}$ choices at step $r$; division
by $\prod_jm_j!$ forgets labels on equal indices. Hence all coefficients
agree, proving \eqref{eq:layer-volume}.
\end{proof}

\subsection{Completion of the proof}

\begin{proof}[Proof of \cref{thm:bodyformula}]
We first assume $d=d^\downarrow$, equivalently
$d_1\geq\cdots\geq d_n$. We decompose $L$ into equal-degree external
products on nested subproducts. \Cref{cor:commonflag} places the bodies of their pullbacks
in a common valuation space, giving the required Minkowski inclusion.
The volume identity in \cref{lem:volume} then forces equality.
The case $n=1$ is the point-valuation formula $[0,d_1]$; assume
$n\geq2$ for the argument below.

To construct the decomposition, set $d_{n+1}=0$ and
$\lambda_j=d_j-d_{j+1}$ for $1\leq j\leq n$.
For $j<n$ and $i\leq j$, choose a line bundle $M_{j,i}$ on $C_i$ of
degree $\lambda_j$ if $\lambda_j>0$, and set $M_{j,i}=\cO_{C_i}$ if
$\lambda_j=0$. Define the remaining factor bundles and the bundles
on subproducts by
\[
 M_{n,i}=L_i\otimes
 \left(\bigotimes_{j=i}^{n-1}M_{j,i}\right)^{-1},\qquad
 A_j=\boxtimes_{i=1}^jM_{j,i}\quad\text{on }X_{\leq j}.
\]
An empty tensor product is the trivial bundle. For each $i$,
$\deg M_{n,i}=d_i-\sum_{j=i}^{n-1}\lambda_j=d_n=\lambda_n>0$.
Consequently the chosen bundles satisfy the actual tensor identity
\begin{equation}
 \label{eq:layer-decomposition}
 L\simeq\bigotimes_{j=1}^n p_j^*A_j.
\end{equation}
For $\lambda_j>0$, the bundle $A_j$ is ample with every factor degree
equal to $\lambda_j$; for $\lambda_j=0$, it is trivial.

To obtain the inclusion, apply \cref{cor:commonflag} with
$J=\{j:\lambda_j>0\}$, the bundles $A_j$ just constructed, and
$\mathcal B=\{L\}$. For the resulting flag, \eqref{eq:equal-degree} gives
\[
 \Dval_{Y_\bullet}(p_j^*A_j)
 =\iota_j\bigl(\lambda_jS_j^{(j)}\bigr)
 =\lambda_jS_j^{(n)}\qquad(j\in J).
\]
The tensor decomposition identifies products of pullback sections by
\[
 \bigotimes_{j\in J}H^0(X_{\leq j},mA_j)\longrightarrow H^0(X,mL),
 \qquad\bigotimes_{j\in J}s_j\longmapsto\prod_{j\in J}p_j^*s_j.
\]
Valuations add under this multiplication. Thus, omitting the trivial
factors and applying \eqref{eq:minkowski}, we obtain
\begin{equation}
 \label{eq:main-inclusion}
 \mathcal P(d)=\sum_{j=1}^n\lambda_jS_j^{(n)}\subseteq\Delta_x(L).
\end{equation}

To compare volumes, use the unimodular linear map
\[
 T(v_1,\ldots,v_n)
 =\left(v_1-\sum_{r=2}^nv_r,v_n,v_{n-1},\ldots,v_2\right).
\]
It sends $S_j^{(n)}$ to $D_j$. Since
$\sum_{j=i}^n\lambda_j=d_i$, \cref{lem:volume} and
\eqref{eq:bodyvolume} give
\[
 \vol_n(\mathcal P(d))=\prod_{i=1}^n d_i=\vol_n(\Delta_x(L)).
\]

Finally, $\mathcal P(d)$ is full dimensional because $\lambda_n=d_n>0$.
A proper containment of compact convex sets containing a full-dimensional
convex body strictly increases volume. Hence \eqref{eq:main-inclusion}
and the volume equality force $\mathcal P(d)=\Delta_x(L)$ in the
decreasing case.

We now remove this assumption and prove the simultaneous assertion.
Consider a finite family and independently chosen sorting permutations as in
the theorem. Apply \cref{lem:simultaneous-relabeling}. It gives a very general,
nonempty locus $G$ on the original flag variety. For every $Y_\bullet\in G$
and each $\alpha$, the transported flag $Y_\bullet^{\sigma_\alpha}$ is generic
for the corresponding reordered bundle. That bundle has decreasing degree
vector $d^{(\alpha)\downarrow}$, so the decreasing-order case just proved on
$X^{\sigma_\alpha}$ gives
\[
 \Delta_{Y_\bullet^{\sigma_\alpha}}
   (L^{(\alpha),\sigma_\alpha})
 =\Delta_{x^{\sigma_\alpha}}
   (L^{(\alpha),\sigma_\alpha})
 =\mathcal P(d^{(\alpha)\downarrow})
 =\mathcal P(d^{(\alpha)}).
\]
Together with \eqref{eq:simultaneous-relabeling}, this proves
\eqref{eq:main-simultaneous} for all $\alpha$ at once. Taking $q=1$ removes
the temporary decreasing-order assumption and yields \eqref{eq:main} for an
arbitrary original factor order.
\end{proof}

When all $d_i=d$, only the last summand remains and
$\Delta_x(L)=dS_n^{(n)}$. For $n=2,3$, expansion of \eqref{eq:main}
recovers \cite[Theorem~1.2]{FLCurves}.

\section{Equality in Minkowski inclusion}
\label{sec:additivity}

The proof below uses only the following elementary injectivity property of the
explicit body map.

\begin{remark}[Injectivity of the explicit body map]
\label{rem:P-injective}
For positive vectors $b,c\in\RR_{>0}^n$,
\[
 \mathcal P(b)=\mathcal P(c)
 \quad\Longleftrightarrow\quad
 b^\downarrow=c^\downarrow.
\]
Indeed, set
\[
 \Psi(v_1,\ldots,v_n)=(v_2,\ldots,v_n,v_1-v_2-\cdots-v_n).
\]
For $K=\Psi(\mathcal P(b))$, let
$a_r(K)=\max\{t\geq0:te_r\in K\}$ and $a_0(K)=0$. Since
\[
 \Psi(S_j^{(n)})
 =\conv\{0,e_{n-j+1},2e_{n-j+2},\ldots,je_n\}
\]
and all summands lie in the nonnegative orthant, their coordinate intercepts
add. Hence
\[
 a_r(K)=\sum_{q=n-r+1}^n b_q^\downarrow,
 \qquad
 b_i^\downarrow=a_{n-i+1}(K)-a_{n-i}(K).
\]
Thus $\mathcal P(b)$ determines $b^\downarrow$ uniquely; the converse is
immediate from \eqref{eq:Pdef}.
\end{remark}

Take $L=\boxtimes_iL_i$ and $M=\boxtimes_iM_i$ on the same product, and
write
\[
 d=(d_i)_i=(\deg L_i)_i,\qquad
 d'=(d_i')_i=(\deg M_i)_i,
\]
with all entries positive.

\begin{corollary}[Minkowski additivity]
\label{cor:additivity}
With this notation, there exists a very general, hence nonempty, locus
$G\subset\operatorname{Flag}(T_xX)$ simultaneously generic for
$L$, $M$, and $L\otimes M$. For every $Y_\bullet\in G$,
\[
 \Delta_{Y_\bullet}(L)+\Delta_{Y_\bullet}(M)
 \subseteq\Delta_{Y_\bullet}(L\otimes M),
\]
and equality holds if and only if the two degree vectors admit a common
decreasing factor order. Equivalently, this occurs if and only if
\begin{equation}
 \label{eq:common-order}
 (d_i-d_j)(d_i'-d_j')\geq0\quad\text{for all }i,j.
\end{equation}
On $G$ the three bodies equal their generic values, so the same condition is
also equivalent to
\begin{equation}
 \label{eq:additivity}
 \Delta_x(L)+\Delta_x(M)=\Delta_x(L\otimes M).
\end{equation}
\end{corollary}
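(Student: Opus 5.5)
Apply \cref{lem:simultaneous-relabeling} to the family $B^{(1)}=L$, $B^{(2)}=M$, $B^{(3)}=L\otimes M$, all positive-degree external products (the last with degree vector $d+d'$), taking the identity permutation for each. This gives a very general locus $G$ on which all three bodies equal their generic values. By \cref{thm:bodyformula} (the $q=3$ simultaneous version) these values are $\mathcal P(d)$, $\mathcal P(d')$ and $\mathcal P(d+d')$. The inclusion is \eqref{eq:minkowski} for $B_1=L$, $B_2=M$ with the common flag $Y_\bullet\in G$. Since the bodies on $G$ coincide with the generic ones, equality for some (equivalently every) $Y_\bullet\in G$ is the same as \eqref{eq:additivity}. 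It therefore remains to decide when $\mathcal P(d)+\mathcal P(d')=\mathcal P(d+d')$.

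\emph{Sufficiency.} Suppose one permutation $\sigma$ sorts both $d$ and $d'$ decreasingly. Then $\sigma$ also sorts $d+d'$, and
\[
(d+d')^\downarrow_j-(d+d')^\downarrow_{j+1}=(d^\downarrow_j-d^\downarrow_{j+1})+(d'^\downarrow_j-d'^\downarrow_{j+1}).
\]
Since $\lambda S+\mu S=(\lambda+\mu)S$ for convex $S$ and $\lambda,\mu\ge0$, summing over $j$ in \eqref{eq:Pdef} gives the equality.

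\emph{Necessity.} This is the main step, and I would handle it with the intercept functionals of \cref{rem:P-injective}, not with volumes. Apply $\Psi$. Every summand lies in the nonnegative orthant, so coordinate intercepts add under Minkowski sums, and $a_r(\Psi\mathcal P(b))$ equals the sum of the $r$ smallest entries of $b$. If equality holds, then for every $r$ the sum of the $r$ smallest entries of $d+d'$ equals the sum of the $r$ smallest entries of $d$ plus that of $d'$. Write $\min_{|I|=r}$ for the minimum over $r$-subsets $I$. Equality in
\[
\min_{|I|=r}\sum_{i\in I}(d_i+d'_i)\ge\min_{|I|=r}\sum_{i\in I} d_i+\min_{|I|=r}\sum_{i\in I} d'_i
\]
forces a single subset $I_r$ that minimizes all three sums.

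Assume \eqref{eq:common-order} fails, say $d_i>d_j$ and $d'_i<d'_j$. I would choose $r$ and an $r$-set containing exactly one of $i,j$ that is minimal for $d+d'$, and derive a contradiction from the exchange $i\leftrightarrow j$. The cleanest route may be induction on $n$ via the nested optimal sets: the $I_r$ can be chosen increasing in $r$, and such a chain yields a common sorting permutation. The subtle point is that optimal sets need not be unique when there are ties, so the chain must be built carefully. I expect this combinatorial step to be the only real obstacle.

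Finally, the existence of a common decreasing order is equivalent to \eqref{eq:common-order}. One direction is immediate. For the other, sort lexicographically by $(d_i,d'_i)$ decreasing; \eqref{eq:common-order} ensures that $d'$ is then also decreasing.
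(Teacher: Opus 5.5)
Your reduction is sound. The following steps are all correct:
\begin{itemize}
\item the choice of $G$ and the inclusion;
\item the sufficiency direction;
\item the intercept computation, namely that $a_r(\Psi\mathcal P(b))$ is the sum of the $r$ smallest entries of $b$;
\item the squeeze showing that equality forces every $r$-set minimizing $d+d'$ to minimize $d$ and $d'$ separately;
\item the closing lexicographic sort.
\end{itemize}
The gap is that necessity, which is the real content of the corollary, is never proved. You say you ``would choose'' an $r$-set containing exactly one of $i,j$ that is minimal for $d+d'$, but you do not show such a set exists or carry out the exchange. You then defer to a nested-chain induction that you explicitly leave open. As written, the implication from equality of the bodies to \eqref{eq:common-order} is missing.

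Your first idea closes it in a few lines, with no induction and no trouble from ties. Order the indices as $\pi(1),\dots,\pi(n)$ so that $c=d+d'$ is weakly increasing, breaking ties arbitrarily. Let $k\in\{i,j\}$ be whichever of the two comes first, say at position $r$, and put $I=\{\pi(1),\dots,\pi(r)\}$. Then $I$ minimizes $\sum_{I}c$ over $r$-sets and contains $k$ but not the other index. By your squeeze, $I$ also minimizes $\sum_I d$ and $\sum_I d'$. Here you need that every minimizer of $d+d'$ is a common minimizer, not merely that some common minimizer exists; your squeeze does give this. If $k=i$, replacing $i$ by $j$ lowers the $d$-sum by $d_i-d_j>0$. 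If $k=j$, replacing $j$ by $i$ lowers the $d'$-sum by $d'_j-d'_i>0$. Either case is a contradiction.

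For comparison, the paper does not work with minimizing subsets. Differencing your partial-sum identities gives exactly the sorted-sum identity $(d+d')^\downarrow=d^\downarrow+d'^\downarrow$, which the paper obtains from the injectivity remark. The paper then takes squared norms to get $\langle d,d'\rangle=\langle d^\downarrow,d'^\downarrow\rangle$ and invokes the rearrangement inequality: an inversion $d_i>d_j$, $d'_i<d'_j$ could be swapped to increase the scalar product strictly. That finish is a one-liner. Your exchange argument is equally elementary once the separating minimizer is exhibited, and it stays closer to the intercept data you computed.
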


\begin{proof}
Apply the simultaneous clause of \cref{thm:bodyformula} to
$L$, $M$, and $L\otimes M$, choosing independently permutations which put
$d$, $d'$, and $d+d'$ in decreasing order. This gives a very general,
nonempty locus $G$ as above. For every $Y_\bullet\in G$,
\[
 \Delta_{Y_\bullet}(L)=\Delta_x(L)=\mathcal P(d^\downarrow),\qquad
 \Delta_{Y_\bullet}(M)=\Delta_x(M)=\mathcal P((d')^\downarrow),
\]
\[
 \Delta_{Y_\bullet}(L\otimes M)=\Delta_x(L\otimes M)
 =\mathcal P((d+d')^\downarrow)=\mathcal P(d+d').
\]
Multiplication of sections gives the Minkowski inclusion
in the common valuation coordinates of $Y_\bullet$.
Since $d^\downarrow$ and $(d')^\downarrow$ are decreasing, the definition
\eqref{eq:Pdef} is linear on the decreasing cone, and therefore
\[
 \Delta_x(L)+\Delta_x(M)
 =\mathcal P(d^\downarrow)+\mathcal P((d')^\downarrow)
 =\mathcal P(d^\downarrow+(d')^\downarrow).
\]
Thus equality in the Minkowski inclusion is equivalent to
\[
 \mathcal P(d^\downarrow+(d')^\downarrow)=\mathcal P(d+d').
\]
By Remark~\ref{rem:P-injective}, this holds exactly when
\begin{equation}
 \label{eq:sorted-sum}
 (d+d')^\downarrow=d^\downarrow+(d')^\downarrow.
\end{equation}

To characterize this identity, a common decreasing factor order plainly
suffices. Conversely, taking squared Euclidean norms in
\eqref{eq:sorted-sum} gives
\[
 \sum_i d_i d_i'=\sum_i d_i^\downarrow (d'^\downarrow)_i.
\]
The decreasing pairing maximizes this scalar product by the rearrangement
inequality. If $d_i>d_j$ but $d_i'<d_j'$, swapping these two $d'$-entries
increases the scalar product by
$(d_i-d_j)(d_j'-d_i')>0$, a contradiction. Hence \eqref{eq:common-order} holds, and sorting the ties gives a common
decreasing factor order. Thus equality of the bodies, the sorted-sum identity, and
a common decreasing factor order are equivalent, as claimed.
\end{proof}

For example, on a product of two curves, degree vectors $(3,1)$ and
$(1,3)$ give
\[
 \vol_2(\Delta_x(L)+\Delta_x(M))=6\cdot2=12,
 \qquad \vol_2(\Delta_x(L\otimes M))=4\cdot4=16.
\]
Thus Minkowski inclusion is strict even for ample external tensor products.
The common factor indexing in \cref{cor:additivity} matters: the two
individual bodies remember their sorted degrees, but not how their factors
are paired in the tensor product.

\end{document}